\definecolor{darkgreen}{rgb}{0,0.5,0}
\definecolor{darkred}{rgb}{0.7,0,0}
\DeclareMathAlphabet{\mathcal}{OMS}{cmsy}{m}{n}
\newtheorem{theorem}{Theorem}[section]
\newtheorem*{theorem*}{Theorem}
\newtheorem{lemma}[theorem]{Lemma}
\newtheorem{definition}[theorem]{Definition}
\newcommand\numberthis{\addtocounter{equation}{1}\tag{\theequation}}
\newcommand\RedeclareMathOperator{%
	\@ifstar{\def\rmo@s{m}\rmo@redeclare}{\def\rmo@s{o}\rmo@redeclare}%
}
\newcommand\rmo@redeclare[2]{%
	\begingroup \escapechar\m@ne\xdef\@gtempa{{\string#1}}\endgroup
	\expandafter\@ifundefined\@gtempa
	{\@latex@error{\noexpand#1undefined}\@ehc}%
	\relax
	\expandafter\rmo@declmathop\rmo@s{#1}{#2}}
\newcommand\rmo@declmathop[3]{%
	\DeclareRobustCommand{#2}{\qopname\newmcodes@#1{#3}}%
}
\newcommand{\de}{\partial}
\newcommand{\lbar}{\overline}
\newcommand{\td}{\widetilde}
\newcommand{\bsl}{\backslash}
\newcommand{\wlim}{\overset{w}{\rightharpoonup}}
\newcommand{\vf}{\varphi}
\newcommand{\ph}{\phi}
\newcommand{\oo}{\infty}
\newcommand{\om}{\Omega}
\newcommand{\dt}{\delta}
\newcommand{\A}{\mathcal A}
\newcommand{\C}{\mathcal C}
\newcommand{\E}{\mathcal E}
\renewcommand{\H}{\mathcal H}
\renewcommand{\L}{\mathcal L}
\newcommand{\N}{\ensuremath{{\mathcal N}}}
\newcommand{\R}{\Bbb R}
\DeclareMathOperator{\Per}{Per}
\DeclareMathOperator{\im}{im}
\DeclareMathOperator{\dist}{dist}
\DeclareMathOperator{\Det}{Det}
\DeclareMathOperator{\cof}{cof}
\RedeclareMathOperator{\div}{div}
\newcommand{\res}[1]{\left.#1\right\vert}
\newcommand{\nm}[1]{\left\lVert #1 \right\rVert}
\newcommand{\Hd}[1]{\ensuremath{{\mathcal H^{#1}}}}
\newcommand{\inn}[2]{\left\langle #1, #2 \right\rangle}
\newcommand{\W}[1]{\textup{W}^{#1}}
\newcommand{\Leb}[1]{\textup{L}^{#1}}
\numberwithin{equation}{section}
\def\XXint#1#2#3{{\setbox0=\hbox{$#1{#2#3}{\int}$ }
		\vcenter{\hbox{$#2#3$ }}\kern-.6\wd0}}
\title{\vspace{-2cm}Sobolev regularity of the inverse for minimizers of the neo-Hookean energy satisfying condition INV}
\author{Panas Kalayanamit\\
\footnotesize{Instituto de Ciencias de la Ingenier\'ia, Universidad de O'Higgins,}\\ 
\footnotesize{Av.\ Bernardo O'Higgins 611, Rancagua, Chile.}}
\date{}
\begin{document}
	
	\maketitle

 \section*{Introduction}
	
 \paragraph{}
 In nonlinear elasticity, the existence of a minimizer of the neo-Hookean energy 
	\begin{equation*}
		E(u) = \int_{\om} |Du(x)|^2 + H(\det Du(x)) \,dx,
	\end{equation*}
 in a suitable, physically sound, subclass of $\W{1,2}(\om;\R^3)$ has long been a problem that attracts attention due to its challenging nature. Here $u \colon \om \to \R^3$ represents a deformation of a neo-Hookean material whose reference configuration is $\om$, an open domain in $\R^3$ (this problem generalizes in a straightforward way to $\W{1,n-1}(\om;\R^n)$, which is what we shall do in this work). One of the main difficulties is that the class of functions where we wish to find a minimizer should not be so big that it contains functions that represent impossible configurations, yet it must not be too small as to exclude physically tenable configurations. The class  must also have sufficiently nice mathematical properties so that we can apply techniques from the calculus of variations. In this paper, we consider a function space $\A_{0}$ and the weak closure of some of its important subclasses that satisfy the above requirements and show that the minimum of $E$ on each of these spaces is attained. Moreover, we show that if a minimizer $u$ of $E$ in any of these classes satisfies condition (INV), then it enjoys extra good properties.
 
 The neo-Hookean energy is the most widely used form of the polyconvex energies
 \[
 I(u) = \int_{\om} W(x,Du(x)) \,dx
 \]
 for hyperelastic materials, where $W$ is the stored-energy density function. The analysis of minimizers of $I$ in suitable function spaces using techniques from the calculus of variations goes back to \cite{Ball1977}, \cite{Sverak1988}, \cite{Mueller1988}, \cite{GiaquintaGiuseppeSoucek1989}, \cite{Maly1993}, \cite{MullerTangYan1994}, \cite{MullerSpector1995}, \cite{SivaloganathanSpector2000}, \cite{ContiDeLellis2003}. It has long been recognized that choosing a function space to minimize $I$ on is a crucial part of the problem, due to the need to respect certain physical properties that are particular to elasticity. One of the main attributes of a real material is that no interpenetration of matter should occur, which translates into the mathematical requirement that our configuration $u$ should be one-to-one (almost everywhere), and that its inverse $u^{-1}$ should exist in a suitable sense and be sufficiently regular. Moreover, compressing the volume of an elastic body to zero or stretching it to be infinitely large should require infinite energy, hence we shall postulate that  $H\colon (0,\oo) \to [0,\oo)$ in \eqref{neohookean_energy} is a convex function satisfying 
 \begin{equation*}
 	\lim_{t\to\oo} \frac{H(t)}{t} = \lim_{s\to 0} H(s) = +\oo.
 \end{equation*}
 By convention, we let $H(t) = \oo$ if $t \le 0$, which implies that we must have $\det Du >0$ a.e., meaning $u$ with finite energy $E(u)<\oo$ cannot be orientation-reversing on any small volume in the reference configuration. For more on local and global invertibility of Sobolev maps in nonlinear elasticity, see \cite{Ball1981}, \cite{HenaoMora-Corral2015}, \cite{BarchiesiHenaoMora-Corral2017}, \cite{Kroemer2020}, \cite{HenaoMora-CorralOliva2021}.

 Another big consideration we must take into account is the formation of \textit{cavities}, which could be an obstacle to the lower semicontinuity of $E$ (and hence to applying the direct method \cite{Ball1984}, \cite{Kristensen2015}). Since a $\W{1,2}(\om;\R^3)$ function for $\om\subset\R^3$ need not be continuous, it is conceivable that a minimizer of $E$ could exhibit some form of discontinuity. Indeed, in \cite{Ball1982}, it was shown (in the axisymmetric case) that there could be an energetically favourable state that is discontinuous even for simple homogeneous Dirichlet boundary data. Since then, cavitation has become a topic of interest, see e.g.\ \cite{Gent1990}, \cite{MullerSpector1995}, \cite{HorganPolignone1995}, \cite{SivaloganathanSpector2000}, \cite{SivaloganathanSpector2000a}, \cite{Henao2009}, \cite{HenaoMora-Corral2010}, \cite{HenaoSerfaty2013}, \cite{Mora-Corral2014}, \cite{HenaoMora-CorralXu2016}, \cite{Canulef-AguilarHenao2019}, \cite{Negron-MarreroSivaloganathan2024}, \cite{KumarLopez-Pamies2021} and references therein. Even though the class of functions used in this paper excludes the formation of cavities, notions from the theory of cavitations are extremely useful in proving that cavities cannot occur, one of which is the condition \textup{(INV)}.
 
 The condition \textup{(INV)}, introduced in \cite{MullerSpector1995} by M\"uller and Spector in the $\W{1,p}$ case with $p>n-1$ (where $n$ is the dimension of the domain), has proved to be a very important notion in elasticity. Intuitively speaking, a Sobolev map $u$ satisfies \textup{(INV)} if the content inside each ball stays inside and the content outside stays outside after being mapped by $u$. This is strongly related to the invertibility of $u$ and is preserved under the weak $\W{1,p}$ limit. Conti and De Lellis \cite{ContiDeLellis2003} extended the notion to $\W{1,2}$ case (for $n=3$) and showed that many properties of \textup{(INV)} for $p>2$ continue to hold, but their generalized condition \textup{(INV)} is no longer preserved by the $\W{1,2}$ weak limit. Indeed, they constructed a sequence of bi-Lipschitz homeomorphisms whose weak limit $u$ in $\W{1,2}$ fails to satisfy \textup{(INV)}, reverses the orientation on a set of positive measure, and $u^{-1}$ is not a $\W{1,1}_{loc}$ function ($u^{-1}$ has a jump discontinuity, which means that $u$ put matters from different parts of the body into contact with one another, violating the non-interpenetration of matter). Hence, the failure if \textup{(INV)} in the limit does lead to pathological behaviours we wish to avoid. However, in \cite{DolezalovaHenclMaly2023} Dole\v{z}alov\'a, Hencl, and Mal\'y showed that if we restrict our attention to maps that can be obtained as the limit of a sequence of maps $(u_j)_j$ that: (i) are homeomorphisms; and (ii) satisfy $\sup_{j} \int_\om (\det Du_j(x))^{-2}\,dx < \oo$, then condition \textup{(INV)} does pass to the limit. 
 
 In this work, we prove the lower semicontinuity of $E$ on the weak closure $\lbar{\A}$ of any subclasses $\A$ of $\A_0$, the class of functions satisfying the divergence identities (defined in Theorem \ref{main_theorem_0}; this is closely related to the \textit{surface energy} introduced in \cite{HenaoMora-Corral2010} by Henao and Mora-Corral), where all members of $\lbar{\A}$ have positive Jacobian, such as $\lbar{\A_{dif}}$ and $\lbar{\A_{hom}}$ (defined in Theorem \ref{main_theorem_1}). In a recent work \cite{DolezalovaHenclMolochanova2023} Dole\v{z}alov\'a, Hencl, and Molchanova showed that, under similar assumptions regarding the neo-Hookean energy as in \cite{DolezalovaHenclMaly2023}, $E$ is lower semicontinuous on $\lbar{\A_{hom}}$ and that $E$ admits a minimizer there. Assuming the same type of coercivity as in \cite{DolezalovaHenclMaly2023, DolezalovaHenclMolochanova2023}, we show that minimizers of $u$ in $\lbar{\A_{hom}}$ not only satisfy (INV) but also the divergence identities \eqref{div_identity}, $\Det Du = (\det Du) \L^3$ and its inverse $u^{-1}$ has $\W{1,1}$ regularity. On the other hand, we take the opportunity for a slight generalization: the lower semicontinuity of $E$, and hence the existence of a minimizer, still holds even if their coercivity condition on $H(\det Du)$ is removed.

 \section{Preliminaries}
	
 \paragraph{}
 	In this article, we will work with a thick Dirichlet boundary condition $b$ that can be extended to a homeomorphism on a slightly larger domain. Equivalently, we let $\td\om, \om \subset \R^n$ be open Lipschitz domains such that $\td\om \subset\subset \om$ (in particular, we assume that $\td\om$ and $\om$ have finite perimeters) and let $b\colon \om \to \R^n$ be a fixed $\W{1,n-1}$ orientation-preserving homeomorphism\footnote{By orientation-preserving, we mean that $\det Db > 0$ a.e. There is a connection between a Sobolev homeomorphism being orientation-preserving and being \textit{sense-preserving}, i.e.\ that the later implies the former when $b \in \W{1,n-1}(\om;\R^n)$ (see  \cite{HenclMaly2010}).}  onto its range that satisfies the divergence identity \eqref{div_identity} (this is the case if $b$ is a bi-Lipschitz orientation-preserving homeomorphism, for example). We shall write
	\[
	\td\om_b := b(\td\om)\ \text{ and }\ \om_b := b(\om).
	\]
	We will require that the Sobolev functions $u$ in our class of interest satisfy $u = b$ on $\om\bsl\td\om$.	The energy functional used throughout this article is the ($n-1$) neo-Hookean energy
	\begin{equation}\label{neohookean_energy}
		E(u) := \int_{\om} |Du(x)|^{n-1} + H(\det Du(x)) \,dx,
	\end{equation}
	where $H\colon (0,\oo) \to [0,\oo)$ is a convex function satisfying 
	\begin{equation}\label{condition_H0}
		\lim_{t\to\oo} \frac{H(t)}{t} = \lim_{s\to 0} H(s) = +\oo.
	\end{equation}

	\paragraph{}
	For the definition and properties of the approximate derivative of a measurable function $u$ defined on (a subset of) $\R^n$, we refer to \cite{Federer1996_GMT}, \cite{EvansGariepy1992_1st} or \cite{MullerSpector1995}. We shall use, in particular, that if $u$ is approximately differentiable at $x_0$, then $u$ is defined and is approximately continuous at $x_0$. It is well-known that a Sobolev function $u$ is approximately differentiable a.e., and that its approximate differential coincides a.e.\ with its distributional derivative $Du$. Note that we do not identify functions that coincide almost everywhere.	Following \cite{HenaoMora-Corral2012}, we shall give the following definition.

 \begin{definition}\label{defn_geometric_image}
 	For a Sobolev function $u \in \W{1,n-1}(\om;\R^n)$ such that $\det Du > 0$ a.e., we define the \emph{geometric image} of a measurable set $E \subset \R^n$ under $u$ to be
 	\[
 	\im_G(u, E) := u(E \cap \om_0),
 	\]
 	where $\om_0$ is the set of points $x \in \om$ that satisfy:
 	\begin{itemize}
 		\item[(a)] the approximate differential of $u$ at $x$ exists and equals $Du(x)$,
 		
 		\item[(b)] $\det Du(x) > 0$, and
 		
 		\item[(c)] there exists $w \in C^1(\R^n;\R^n)$ and a compact set $K$ of density $1$ at $x$ such that $\res{u}_K = \res{w}_K$ and $\res{Du}_K = \res{Dw}_K$.
 	\end{itemize}
 \end{definition}
 	
 	This definition of the geometric image is slightly different that the one used in \cite{MullerSpector1995} and \cite{ContiDeLellis2003}, however $\om_0$ is of full measure in $\om$, so our definition of the geometric image is equivalent to the other definition up to an $\L^n-$null set (see \cite[Lemma 2.5]{HenaoMora-Corral2012}). 
	
	An important property of the geometric image is that whenever $u \in \W{1,n-1}(\om;\R^n)$ with $\det Du > 0$ a.e.\ is also one-to-one a.e., we have $\res{u}_{\om_0}$ is one-to-one (see \cite[Lemma 3]{HenaoMora-Corral2011}) and the \textit{change of variables formula} (or the \textit{area formula}) 
	\[
	\int_E (\vf\circ u)(x) \det Du(x) \,dx = \int_{\im_G(u,E)} \vf(y) \,dy 
	\]
	holds, where $E\subset \om$ is a measurable set and $\vf\colon \R^n \to \R$ is a measurable function (see, for example, \cite[Proposition 2.6]{MullerSpector1995}). For any such $u$, we may define its \textit{inverse} to be the function $u^{-1} \colon \im_G(u,\om) \to \R^n$ mapping $y$ to the unique point $x\in\om_0$ such that $u(x) = y$. The change of variables formula takes the more general form
	\begin{equation}\label{eqn_change_of_variables_formula_with_multiplicity}
		\int_E (\vf\circ u)(x) |\det Du(x)| \,dx = \int_{\R^n} \vf(y)\, \N_E(y) \,dy,
	\end{equation}
	where
	\[
	\N_E(y) := \# \{ x \in E : u \textup{ is approximately differentiable at $x$ and } u(x) = y \},
	\]
	if we only assume that $u\colon\om\to\R^n$ is approximately differentiable a.e. and whenever either integral exists (see \cite{Federer1996_GMT} or \cite{MullerSpector1995}). Here $\# A$ denotes the cardinality of $A$.

 	\begin{definition}\label{defn_degree_and_im_T}
		Let $u\in\W{1,n-1} \cap \Leb{\oo}(\om;\R^n)$ and $U\subset\subset \R^n$ be a \emph{good open set}\footnote{An open set $U\subset \om$ is called a good open set with respect to $u$ if $U$ is compactly contained in $\om$, has a $C^2$ boundary, satisfies
			\begin{itemize}
				\item $\res{u}_{\de U} \in \W{1,n-1} \cap \Leb{\oo}(\de U;\R^n)$ and $\res{(\cof Du)}_{\de U} \in \Leb{1}(\de U)$,
				
				\item $\Hd{n-1}(\de U\bsl \om_0) = 0$ ($\om_0$ is defined as in Definition \ref{defn_geometric_image}) and $D(\res{u}_{\de U})(x)$ coincides with the orthogonal projection of $Du(x)$ onto $T_x(\de U)$, the tangent space of $\de U$ at $x$,
			\end{itemize}
			and a few extra properties. The full definition of $\mathcal U_u$, the family of the good open sets in $\om$, can be found in \cite[Definition 2.15]{HenaoMora-Corral2012}. } with respect to $u$. We may define the \emph{degree} $\deg(u,\de U, \cdot)$ to be the unique $\Leb{1}(\R^n)$ function\footnote{See \cite[Remark 3.3]{ContiDeLellis2003} to see that the degree is well-defined (see also \cite[Proposition 2.10]{HenaoMora-Corral2012}).  } such that 
		\begin{equation}\label{degree_defn}
			\int_{\R^n} \div g(y) \deg(u,\de U, y) \,dy = \int_{\de U} g(u(x)) \cdot (\cof Du(x))[\nu(x)] \,d\Hd{n-1}(x)
		\end{equation}
		for each vector field $g\in C^1(\R^n;\R^n)$. $\deg(u,\de U, \cdot)$ is integer-valued a.e. The \emph{ topological image} of $u$, $\im_T(u, U)$, is the set of points where 
		\[
		A_{u,U} := \{ y \in \R^n : \deg(u,\de U, y) \ne 0 \}
		\]
		has density $1$, i.e.\ 
		\[
		\im_T(u, U) := \left\{ y \in \R^n : \lim_{r\searrow 0} \frac{\L^n(B(y,r) \cap A_{u,U})}{\L^n(B(y,r))} = 1. \right\}
		\]
	\end{definition}

	\begin{definition}
		A function $u\in\W{1,n-1} \cap \Leb{\oo}(\om;\R^n)$, is said to satisfy \emph{condition} \textup{(INV)} if for every $x_0\in \om$, the following conditions hold for a.e.\ $r\in (0,\dist(x_0,\de \om))$.
		\begin{itemize}
			\item[(i)] $u(x) \in \im_T(u,B(x_0,r))$ for a.e. $x \in B(x_0,r)$.

			\item[(ii)] $u(x) \notin \im_T(u,B(x_0,r))$ for a.e. $x \in \om\bsl B(x_0,r)$.
		\end{itemize}
	\end{definition}
	Following \cite{ContiDeLellis2003}, the \textit{topological image of a point} $x\in\R^n$ is defined as 
	\[
	\im_T(u,\{x\}) := \bigcap_{r\in R_x} \im_T(u,B(x,r)),
	\]
	where $R_x$ is the set of \textit{good radii}\footnote{See \cite[Definition 3.11, Definition 3.13 and Remark 4.4]{ContiDeLellis2003} for the definition of good radii.}, which is a subset of full measure in $(0,\dist(x,\de \om))$. An equivalent definition is given in \cite[Definition 2.18]{HenaoMora-Corral2012} when $u$ satisfies condition (INV).

 Based on the idea presented in \cite{GiaquintaGiuseppeSoucek1989} and \cite{Mueller1988}, we work with maps satisfying the divergence identities.
	\begin{definition}
		A Sobolev function $u\in \W{1,n-1}(\om;\R^n)$ with $\det Du \in\Leb{1}_{loc}(\om)$ is said to satisfy the \textit{divergence identities} if
		\begin{equation}\label{div_identity}
			\int_{\om} (\div g)(u(x)) \phi(x) \det Du(x) \,dx = - \int_{\om} g(u(x)) \cdot (\cof Du(x))[D\phi(x)] \,dx
		\end{equation}
		for every $\phi\in C^1_c(\om)$ and every $g\in C^1_c(\R^n;\R^n)$.
	\end{definition}
	See \cite{BarchiesiHenaoMora-CorralRodiac2023_relaxation} for a more recent work that employs this notion. We will show below (in Lemma \ref{lemma_1}) that, for a.e.\ injective maps, the divergence identities are equivalent to condition \textup{(INV)} together with $\Det Du = (\det Du)\L^n$. 
	
	There are a few ways to formulate the divergence identities in the literature. In particular, they are equivalent to the statements that the \textit{surface energy} of $u$ is zero, i.e.\ $\E(u)=0$ (or $\lbar\E(u)=0$, see \cite[Section 4]{HenaoMora-Corral2010} and also \cite[Corollary 1.6.5]{Llavona1986_approx_book}), where
	\[
	\E(u) := \sup_{\nm{f}_\oo \le 1} \int_{\om} D_x f(x,u(x)) \cdot \cof Du(x) + \div_y\, f(x,u(x)) \det Du(x) \,dx,
	\]
	and the supremum is taken over all $f \in C^1_c(\om \times \R^n; \R^n)$, whereas $\lbar\E$ is defined analogously but restricting $f$ to be of the form $f(x,y) = \phi(x)g(y)$ for $\phi\in C^1_c(\om)$ and $g\in C^1_c(\R^n;\R^n)$. Note that by taking $g(y) = \frac{1}{n}y$, the expression \eqref{div_identity} can be used to define the \textit{distributional determinant} of $Du$:
	\[
	\inn{\Det Du}{\phi} := -\frac{1}{n} \int_{\om} u(x) \cdot (\cof Du(x))[D\phi(x)] \,dx
	\]
	whenever $u \in \W{1,n-1}\cap \Leb{\oo}(\om;\R^n)$. When $\Det Du$ can be identified with a Radon measure $\mu$, we shall write $\Det Du(E)$ for a measurable set $E$ to denote $\mu(E)$.

 \section{Main results}

 \begin{lemma}\label{lemma_1}
 	Let $u\in \W{1,n-1}(\om;\R^n)$ and suppose that $u=b$ in $\om\bsl\td\om$, $u$ is one-to-one a.e., $\det Du \in \Leb{1}_{loc}(\om)$, $\det Du > 0$ a.e.\ Then the following are equivalent.
 	\begin{itemize}
 		\item[(i)] $u$ satisfies the divergence identities \eqref{div_identity}.
 		
 		\item[(ii)] $u^{-1} \in \W{1,1}(\om_b ; \R^n)$ and $\im_G(u,\om) = \om_b$ a.e.
 		
 		\item[(iii)] $u\in \Leb{\oo}(\om;\R^n)$, $u$ satisfies condition \textup{(INV)} and $\Det Du = (\det Du)\L^n$.
 	\end{itemize}
 \end{lemma}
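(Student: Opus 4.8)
Throughout, the hypotheses and the area formula (which, since $u$ is one-to-one a.e., reduces \eqref{eqn_change_of_variables_formula_with_multiplicity} to a genuine change of variables over $\im_G(u,\om)$) will be used freely, together with the classical fact — built into condition (c) of Definition \ref{defn_geometric_image} — that at a.e.\ $y\in\im_G(u,\om)$ the inverse $u^{-1}$ is approximately differentiable, with $Du^{-1}(y)=\big(Du(u^{-1}(y))\big)^{-1}=(\det Du)^{-1}(\cof Du)^{T}(u^{-1}(y))$. This pointwise derivative is automatically integrable: $Du\in\Leb{n-1}(\om)$ forces $\cof Du\in\Leb1(\om)$ by H\"older, and $\int_{\im_G(u,\om)}\big|(\det Du)^{-1}(\cof Du)^{T}\!\circ u^{-1}\big|\,dy=\int_{\om}|\cof Du|\,dx<\oo$. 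Hence the content of \textup{(ii)} is only that $\im_G(u,\om)=\om_b$ up to a null set and that the distributional derivative of $u^{-1}$ coincides with this pointwise derivative (no singular part); once the former holds, $u^{-1}$ takes values in the bounded set $\om$, so $u^{-1}\in\Leb\oo\subset\Leb1(\om_b)$ and $u^{-1}\in\W{1,1}(\om_b)$ reduces to the agreement of weak and pointwise derivatives. I would treat \textup{(i)} as a hub and prove the two biconditionals \textup{(i)}$\Leftrightarrow$\textup{(ii)} and \textup{(i)}$\Leftrightarrow$\textup{(iii)}.

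\emph{\textup{(i)}$\Leftrightarrow$\textup{(ii)}.} Assume \textup{(ii)}. Changing variables in both sides of \eqref{div_identity}, using $(\cof Du)[D\phi]=(\det Du)(Du)^{-T}[D\phi]$ and the chain rule $D(\phi\circ u^{-1})=(Du^{-1})^{T}\big[(D\phi)\circ u^{-1}\big]$, turns \eqref{div_identity} into
\[
\int_{\om_b}(\div g)(y)\,(\phi\circ u^{-1})(y)\,dy=-\int_{\om_b}g(y)\cdot D(\phi\circ u^{-1})(y)\,dy ,
\]
which is just integration by parts for the $\W{1,1}(\om_b)$ function $\phi\circ u^{-1}$ — valid because $\phi\equiv0$ near $\de\om$ and $u=b$ is a homeomorphism there, so $\phi\circ u^{-1}$ vanishes near $\de\om_b$ and extends by zero to $\R^n$. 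Conversely, assume \textup{(i)}. The identity $\im_G(u,\om)=\om_b$ will be obtained in the next paragraph using only \eqref{div_identity}; granting it, read the above change of variables backwards with $g=\psi\,e_j$ ($\psi\in C^1_c(\om_b)$) and with $\phi$ the (non-compactly supported) coordinate function $x_i$: rigorously take $\phi=x_i\theta_k$ with cut-offs $\theta_k\uparrow1$ and let $k\to\infty$, the $D\theta_k$-error being supported where $u=b$ and cancelling because $b$ itself satisfies \eqref{div_identity}; this identifies the weak derivative of $u^{-1}$ with its pointwise value, so $u^{-1}\in\W{1,1}(\om_b)$.

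\emph{\textup{(i)}$\Leftrightarrow$\textup{(iii)}.} The key intermediate statement is that, under the standing hypotheses, \eqref{div_identity} is equivalent to: for a.e.\ good open set $U$, $\deg(u,\de U,\cdot)=\chi_{\im_G(u,U)}$ a.e. Indeed, testing \eqref{div_identity} with $\phi$ approximating $\chi_U$, the right side tends (via the $\Leb1$ trace of $\cof Du$ on $\de U$ and \eqref{degree_defn}) to $\int\div g\cdot\deg(u,\de U,\cdot)$ and the left side tends (by the area formula) to $\int\div g\cdot\chi_{\im_G(u,U)}$; since $g$ is arbitrary and both functions lie in $\Leb1(\R^n)$, the two coincide. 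The converse is the layer-cake formula: writing $\phi$ as an integral of indicators of its superlevel and sublevel sets — good open sets for a.e.\ level — summing \eqref{degree_defn} and collapsing $\int_0^\infty\chi_{\im_G(u,\{\phi>t\})}(y)\,dt=\phi(u^{-1}(y))^+$ recovers \eqref{div_identity}. Granting this equivalence: from \eqref{div_identity}, choosing $U$ a good open set with $\td\om\subset\subset U\subset\subset\om$ and $\de U\subset\om\bsl\td\om$ (where $u=b$) gives $\deg(u,\de U,\cdot)=\deg(b,\de U,\cdot)=\chi_{b(U)}$, hence $\im_G(u,U)=b(U)$ a.e., and combined with $\im_G(u,\om\bsl U)=b(\om\bsl U)$ a.e.\ (Lusin's condition (N) for $b$) this yields $\im_G(u,\om)=\om_b$ a.e.\ and therefore $u\in\Leb\oo$; then $g(y)=\tfrac1n y$ in \eqref{div_identity} gives $\Det Du=(\det Du)\L^n$ by definition of the distributional determinant; and $\deg(u,\de B(x_0,r),\cdot)=\chi_{\im_G(u,B(x_0,r))}$ for a.e.\ good $r$, together with a.e.\ injectivity and the density-one behaviour of approximately differentiable maps with positive Jacobian, yields both clauses of \textup{(INV)}. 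For the reverse implication, \textup{(INV)} makes $\deg(u,\de U,\cdot)$ equal $\chi_{\im_T(u,U)}$ and $\{0,1\}$-valued, while $\Det Du=(\det Du)\L^n$ forces $\L^n(\im_T(u,U))=\L^n(\im_G(u,U))$ and hence $\im_T(u,U)=\im_G(u,U)$ a.e.; so $\deg(u,\de U,\cdot)=\chi_{\im_G(u,U)}$, which is \eqref{div_identity}.

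The main obstacle is not the change-of-variables bookkeeping but the two points where the \emph{thick} boundary datum is indispensable: first, pinning down $\im_G(u,\om)=\om_b$ — and thereby $u\in\Leb\oo$ — which forces one to track the degree across a good open set wedged between $\td\om$ and $\om$ and to use both that $b$ is a homeomorphism and that $b$ satisfies \eqref{div_identity}, with due care about Lusin's condition (N) for the merely $\W{1,n-1}$ map $b$; and second, the cancellation of the $D\theta_k$-terms in the $\phi=x_i\theta_k$ limit, which hinges on \eqref{div_identity} holding for $b$ against the non-compactly supported test function $x_i$, legitimate because $b$ is, e.g., bi-Lipschitz. The supporting facts — the equivalence of \eqref{div_identity} with the degree formula $\deg(u,\de U,\cdot)=\chi_{\im_G(u,U)}$, the layer-cake/coarea manipulations, and the topological relations among $\im_T$, $\im_G$ and $\deg$ for maps satisfying \textup{(INV)} (in particular $\deg\in\{0,1\}$ and $\Det Du(B)=\L^n(\im_T(u,B))$) — are by now standard, following the lines of \cite{ContiDeLellis2003} and \cite{HenaoMora-Corral2012}.
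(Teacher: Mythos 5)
Your proposal is sound, and its core mechanism coincides with the paper's: both reduce \eqref{div_identity} to the a.e.\ degree identity $\deg(u,\de U,\cdot)=\N_U=\chi_{\im_G(u,U)}$ on good open sets (test functions increasing to $\chi_U$, the relation \eqref{degree_defn}, and the area formula), both obtain $\im_G(u,\om)=\om_b$ a.e., hence $u\in\Leb{\oo}$, by running this on a good set $U$ with $\td\om\subset\subset U\subset\subset\om$ and exploiting $u=b$ outside $U$, and the derivation of \textup{(INV)} via Lebesgue density points and Federer's change of variables is the same as in the paper (which follows \cite[Theorem 4.1, Lemma 5.1]{BarchiesiHenaoMora-Corral2017}). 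Where you genuinely diverge is in two implications. For $(i)\Rightarrow(ii)$ the paper first establishes \textup{(INV)} and $\Det Du=(\det Du)\L^n$ and then invokes \cite[Theorem 3.4]{HenaoMora-Corral2015} to identify $Du^{-1}$ with $(Du\circ u^{-1})^{-1}\L^n$; you instead compute the weak derivative of $u^{-1}$ directly by testing \eqref{div_identity} with $g=\psi e_j$, $\phi=x_i\theta_k$ and changing variables, which is more self-contained, bypasses \textup{(INV)}, and is close in spirit to the computation in \cite{BarchiesiHenaoMora-CorralRodiac2023_relaxation} that the paper only cites for $(ii)\Rightarrow(i)$. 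For $(iii)\Rightarrow(i)$ the paper simply cites \cite[Lemma 5.3]{HenaoMora-Corral2012}, whereas you sketch a layer-cake/coarea reconstruction of \eqref{div_identity} from the degree formula. The paper's route outsources the delicate steps to precise statements in the literature; yours runs a single hands-on device (degree identity plus area formula) through the whole lemma.

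Two steps are thinner than your wording suggests, though neither is fatal. First, in $(ii)\Rightarrow(i)$, the vanishing of $\phi\circ u^{-1}$ near $\de\om_b$ does not follow merely from ``$u=b$ is a homeomorphism there'': a priori $u$ could send part of $\td\om$ (where $u\ne b$) close to $\de\om_b$. One needs a.e.\ injectivity together with the fact that $b$ maps the null set $(\om\bsl\td\om)\bsl\om_0$ to a null set (which does follow from $b$ being a homeomorphism satisfying \eqref{div_identity}, or trivially if $b$ is bi-Lipschitz), so that a.e.\ point of a boundary collar of $\om_b$ has its unique preimage in the region where $u=b$; the same issue hides behind your appeal to Lusin's (N) for $b$ when you write $\im_G(u,\om\bsl U)=b(\om\bsl U)$ a.e. Incidentally, in your $(i)\Rightarrow(ii)$ cut-off argument no cancellation against $b$ is needed: since $\psi\in C^1_c(\om_b)$ and $u=b$ on the support of $D\theta_k$, the error term is identically zero for large $k$. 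Second, in the layer-cake direction, \textup{(INV)} is formulated only for balls, while your superlevel sets $\{\phi>t\}$ are general domains (and for merely $C^1$ test functions Sard's theorem is unavailable, so you should first reduce to smooth $\phi$); upgrading $\deg(u,\de U,\cdot)=\chi_{\im_T(u,U)}$ from balls to such $U$ is essentially the content of \cite[Lemma 5.3]{HenaoMora-Corral2012}, so unless you intend to redo that work you should cite it, as the paper does.
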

 
 \textit{Remark}: A priori, we only require that $\det Du \in \Leb{1}_{loc}(\om)$, but whenever any of the equivalent statements hold, we actually have $\det Du \in \Leb{1}(\om)$. Indeed, this immediately follows from (ii) and the change of variable formula
 \[
 \int_\om \det Du(x) \,dx = \int_{\im_G(u,\om)}1 \,dy = |\om_b| < \oo.
 \]
 
 \begin{proof}
 	The implication $(ii) \implies (i)$ is proved in detail in \cite{BarchiesiHenaoMora-CorralRodiac2023_relaxation} for the case $n=3$, but their technique applies to any $u \in \W{1,n-1}(\om;\R^n)$ (the converse of this is also sketched there, but for the convenience of the readers, we shall give more details of the proof down below).

 	Assuming $(i)$, we shall first prove that $\im_G(u,\om) = \om_b$ a.e.\ and, in particular, $u\in\Leb{\oo}(\om,\R^n)$. Let $U$ be an arbitrary good open set such that $\td\om \subset\subset U \subset\subset \om$.\footnote{Although we have not shown that $u\in\Leb{\oo}(\om;\R^n)$ yet at this stage, we know that $u\in\Leb{\oo}(\om\bsl\td\om;\R^n)$ since $u$ coincides with $b$ on $\om\bsl\td\om$, hence the argument in \cite[Lemma 2.16]{HenaoMora-Corral2012} can be applied to an arbitrary open set $\td U \subset \om\bsl\td\om$ with $C^2$ boundary to prove the existence of a good open set $U$. Thus $\deg(u,\de U,\cdot)$ and the topological image $\im_T(u,U)$ are well-defined} We shall follow closely the proof of \cite[Theorem 4.1]{BarchiesiHenaoMora-Corral2017}\footnote{The result can be adapted to the case $p=n-1$ too, even though it is stated for $p>n-1$ in \cite{BarchiesiHenaoMora-Corral2017}. This is because the proof relies only on the fact that the degree satisfies the relation \eqref{degree_defn} and not on the continuity of $u$ on $\de U$. The  necessary change is that $\deg(u,\de U,y)$ in the definition of $\im_T(u,U)$ must be understood in terms of \eqref{degree_defn} instead of the classical Brouwer degree (they coincide a.e.\ and for almost every $U$ whenever $\res{u}_{\de U}\in C(\de U;\R^{n-1})$). } and consider a family $\{ \ph_\dt \}_{\dt>0}\subset C^1_c(\om)$ such that $\ph_\dt \nearrow \chi_U$ pointwise on $\om$ as $\dt \searrow 0$ and 
 	\[
 	\lim_{\dt\to 0} \int_\om g(u(x)) \cdot (\cof Du(x))[D\ph_\dt(x)] \,dx = - \int_{\de U} g(u(x)) \cdot (\cof Du(x))[\nu(x)]  \,d\Hd{n-1}(x)
 	\]
 	for any arbitrary $g\in C^1_c(\R^n;\R^n)$, where $\nu(x)$ is the outward unit normal at $x\in \de U$ (see the proof of \cite[Theorem 2]{HenaoMora-Corral2010} for the existence of $\{ \ph_\dt \}_{\dt>0}$). Thus, we have
 	\begin{align}
 		\int_{\R^n} \div g(y) \deg(u, \de U, y) \,dy &= \int_{\de U} g(u(x)) \cdot (\cof Du(x))[\nu(x)] \,d\H^{n-1}(x) \nonumber \\
 		&= - \lim_{\dt\to 0} \int_\om g(u(x)) \cdot (\cof Du(x))[D\ph_\dt(x)] \,dx \nonumber \\
 		&= \lim_{\dt\to 0} \int_\om    (\div g)(u(x)) \ph_\dt(x) \det Du(x) \,dx \label{lemma_1_eqn1} \\
 		&= \int_U (\div g)(u(x)) \det Du(x) \,dx \label{lemma_1_eqn2} \\
 		&= \int_{\R^n} \div g(y)\, \N_U(y) \,dy, \nonumber
 	\end{align}
 	where we used the divergence identities in \eqref{lemma_1_eqn1}, the dominated convergence theorem in \eqref{lemma_1_eqn2}, and the change of variables formula \eqref{eqn_change_of_variables_formula_with_multiplicity} for the last equality. This shows that $\deg(u, \de U, \cdot) - \N_U = c$ a.e.\ for some constant $c \in \Bbb Z$ (since both functions are integer-valued). We may then argue as in \cite[Theorem 4.1]{BarchiesiHenaoMora-Corral2017} to conclude that
 	\[
 	\deg(u, \de U, \cdot) = \N_U \  \text{a.e.}\quad\text{and}\quad  \im_T(u,U) = \im_G(u,U) \  \text{a.e.}\footnote{see \cite[equation (5.1)]{BarchiesiHenaoMora-Corral2017}}  
 	\]
 	Since $b$ is a homeomorphism and $u$ coincides with $b$ on $\de U$, we have $\im_T(u,U) = b(U)$. Then, since $u$ is one-to-one a.e. and $u$ coincides with $b$ on $\om\bsl U$, 
 	\[
 	im_G(u,\om) = im_G(u,U) \cup im_G(u,\om\bsl U) \overset{\text{a.e.}}{=} b(U) \cup b(\om\bsl U) = b(\om) = \om_b.
 	\]

 	$(i)\iff (iii)$: Suppose that $u$ satisfies the divergence identities, we have seen that $\im_G(u,\om) = \om_b$ a.e. Hence, in particular, $u \in \Leb{\oo}(\om;\R^n)$. To show that $u$ satisfies \textup{(INV)}, we consider a fixed $x_0 \in \om$. Using \cite[Theorem 4.1]{BarchiesiHenaoMora-Corral2017}, it follows that for a.e.\ $r \in (0, \dist(x_0,\de\om))$, since each $B(x_0,r)$ is a good open set in the sense of that theorem, we have  
 	\[
 	\deg(u, \de B(x_0,r) , \cdot) = \N_{B(x_0,r)} \  \text{ a.e.}
 	\]
 	Hence, as in \cite[Lemma 5.1]{BarchiesiHenaoMora-Corral2017}, $u(x) \in A_{u,B(x_0,r)}$ (defined as in Definition \ref{defn_degree_and_im_T}) for a.e.\ $x\in B(x_0,r)$. By applying Lebesgue differentiation theorem to $\chi_{A_{u,B(x_0,r)}}$ and using that $\det Du > 0$ a.e. and Federer's change of variable formula (see, e.g., \cite[Proposition 2.3 and Lemma 2.5]{HenaoMora-Corral2012}), we find that $u(x) \in \im_T(u,B(x_0,r))$ for a.e.\ $x \in B(x_0,r)$ as wanted. The proof that $u(x) \notin \im_T(u,B(x_0,r))$ for a.e. $x \in \om\bsl B(x_0,r)$ is similar, noting that $\N_{B(x_0,r)}(u(x)) = 0$ for a.e.\ $x\in \om\bsl B(x_0,r)$ since $u$ is one-to-one a.e. Hence condition \textup{(INV)} is satisfied. The equivalence of $(i)$ and $(iii)$ then follows from \cite[Lemma 5.3]{HenaoMora-Corral2012}.

 	$(i)\implies(ii)$: We have already seen that $(i)$ implies $\im_G(u,\om) = \om_b$ a.e., so we only need to prove the Sobolev regularity of $u^{-1}$. We shall follow the argument made in \cite{BarchiesiHenaoMora-CorralRodiac2023_relaxation} (see \cite[Proposition 4.12]{BarchiesiHenaoMora-CorralRodiac2023_harmonic} for the axisymmetric case). Indeed, since we have shown that $(i)$ implies condition (INV), we may apply \cite[Theorem 3.4]{HenaoMora-Corral2015} (note also that $\td u$ that appears in the theorem coincides with $u$ since we already proved that $(i)$ implies $\Det Du = (\det Du)\L^n$, i.e.\ $u$ opens no cavities) to a good open set $U$ such that $\td\om \subset\subset U \subset\subset \om$ and see that
 	\[
 	Du^{-1}\llcorner b(U) =  (Du \circ u^{-1})^{-1}\L^n \llcorner b(U).
 	\]
 	(here $Du^{-1}$ on the left hand side denotes the distributional derivative of $u^{-1}$, which, a priori, is not necessarily a function). In particular, this shows that $Du^{-1} \in \Leb{1}(b(U); \R^{n\times n})$ due to the change of variable formula
 	\[
 	\int_{b(U)} (Du \circ u^{-1}(y))^{-1} \,dy = \int_U (Du(x))^{-1} |\det(Du(x))| \,dx = \int_U |\cof Du(x)| \,dx
 	\]
 	and the fact that $u\in\W{1,n-1}(\om; \R^n)$. Since $Du^{-1}$ is integrable on two overlapping open sets $b(U)$ and $\om_b\bsl\lbar{\td{\om}_b}$, it follows that $Du^{-1} \in \Leb{1}(\om_b; \R^{n\times n})$ and we are done.
 \end{proof}

 \begin{lemma}\label{lemma_2}
 	Let $(u_j)_j$ be a sequence in $\W{1,n-1}(\om;\R^n)$ such that $u_j=b$ in $\om\bsl\td\om$. Assume further that
 	\begin{itemize}
 		\item[(i)] $u_j$ is one-to-one a.e., $\det Du_j > 0$ a.e.,
 		
 		\item[(ii)] $u_j$ satisfies the divergence identities \eqref{div_identity} for each $j\in\Bbb N$ and
 		
 		\item[(iii)] $\sup_j E(u_j) < \oo$.
 	\end{itemize}
 	Suppose that  and there exist  $u \in \W{1,n-1}(\om;\R^n)$ with $\det Du \ge 0$ a.e.\ such that $u_j \to u$ a.e. Then 
 	\begin{itemize}
 		\item[(a)] $u$ is one-to-one a.e., $\det Du > 0$ a.e.,
 		
 		\item[(b)] $\im_G(u,\om) = \om_b$ a.e. and
 		
 		\item[(c)] $\det Du_j \wlim \det Du$ in $\Leb{1}$.
 	\end{itemize}
 \end{lemma}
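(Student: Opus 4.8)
The plan is to establish the three conclusions in a sequence that allows each to feed into the next, using Lemma \ref{lemma_1} together with the standard lower-semicontinuity and weak-continuity machinery for the distributional determinant and cofactor. First I would record the consequences of the hypotheses: by Lemma \ref{lemma_1}, each $u_j$ satisfies condition \textup{(INV)}, $u_j\in\Leb{\oo}(\om;\R^n)$, $\Det Du_j=(\det Du_j)\L^n$, $u_j^{-1}\in\W{1,1}(\om_b;\R^n)$, and $\im_G(u_j,\om)=\om_b$ a.e. Since $u_j=b$ on $\om\bsl\td\om$ and $b$ is a fixed homeomorphism, all the $u_j$ share the same image $\om_b$ with bounded measure; combined with $\sup_j E(u_j)<\oo$ and the superlinearity of $H$ at infinity in \eqref{condition_H0}, the sequence $\det Du_j$ is equiintegrable (de la Vall\'ee–Poussin), so after passing to a subsequence $\det Du_j\wlim\theta$ in $\Leb{1}(\om)$ for some $\theta\ge 0$. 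The task is then to identify $\theta=\det Du$ a.e., which simultaneously gives (c) and the strict positivity in (a).

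To identify the weak limit I would use the distributional determinant. Since $u_j\to u$ a.e.\ and $\cof Du_j\wlim\cof Du$ in $\Leb{1}$ (this is the standard weak continuity of minors at the exponent $n-1$ for $\W{1,n-1}$ maps with equiintegrable cofactors, which holds here because $|\cof Du_j|\le |Du_j|^{n-1}$ and the latter is equiintegrable by $\sup_j E(u_j)<\oo$; alternatively one invokes the weak continuity results of \cite{MullerSpector1995} or \cite{Mueller1988}), one passes to the limit in the definition of $\Det Du_j$ to get $\Det Du_j\to\Det Du$ in the sense of distributions. On the other hand $\Det Du_j=(\det Du_j)\L^n\wlim\theta\L^n$ in $\Leb{1}$ hence as distributions, so $\Det Du=\theta\L^n$; in particular the distributional determinant of the limit is an absolutely continuous measure with density $\theta$. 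The remaining point is the pointwise identity $\theta=\det Du$ a.e. For this I would use that the pointwise (a.e.) Jacobian is always weakly below the distributional one for maps with nonnegative Jacobian — more precisely, lower semicontinuity of $u\mapsto\int\det Du$ under a.e.\ convergence with the surface-energy term controlled (the surface energy of each $u_j$ vanishes, and it is lower semicontinuous, so $\lbar\E(u)=0$ too, i.e.\ $u$ itself satisfies the divergence identities, giving $\Det Du=(\det Du)\L^n$ by Lemma \ref{lemma_1} once we know $u$ is a.e.\ injective). Hence $\theta=\det Du$ a.e.

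For (a) and (b): a.e.\ injectivity of $u$ is the delicate structural point and I would obtain it from condition \textup{(INV)}, which passes to the a.e.\ limit — the topological images $\im_T(u_j,B(x_0,r))$ behave well under the convergence because the degrees $\deg(u_j,\de B(x_0,r),\cdot)$ converge (via \eqref{degree_defn}, using $u_j\to u$ a.e.\ on almost every sphere and $\cof Du_j\wlim\cof Du$ restricted to almost every sphere, after a Fubini/good-radius selection). Thus $u$ satisfies \textup{(INV)}, and by the argument in \cite[Lemma 5.3]{HenaoMora-Corral2012} or \cite{MullerSpector1995} an \textup{(INV)} map with $\det Du>0$ a.e.\ and the correct boundary behaviour is one-to-one a.e.; combined with the boundary condition $u=b$ on $\om\bsl\td\om$ this also pins down $\im_G(u,\om)=\om_b$ a.e.\ exactly as in the proof of Lemma \ref{lemma_1}. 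Once $u$ is a.e.\ injective, $\det Du\in\Leb{1}$ and the area formula give $\int_\om\det Du=|\im_G(u,\om)|=|\om_b|=\int_\om\det Du_j$, so no mass is lost and $\theta=\det Du$ is forced; strict positivity $\det Du>0$ a.e.\ then follows from $\int_\om H(\det Du)\le\liminf_j\int_\om H(\det Du_j)<\oo$ (Fatou / convexity of $H$ applied along the a.e.-convergent Jacobians, or de la Vall\'ee–Poussin lower semicontinuity) together with $H(0^+)=+\oo$ in \eqref{condition_H0}, which rules out $\det Du=0$ on a set of positive measure.

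I expect the \textbf{main obstacle} to be the passage of condition \textup{(INV)} to the a.e.\ limit at the low exponent $p=n-1$ — in particular showing that the degree converges on almost every sphere and that the topological images do not "lose mass" in the limit — since this is exactly the point where Conti–De Lellis's counterexample lives and where the extra structure (the divergence identities for each $u_j$, equivalently $\Det Du_j=(\det Du_j)\L^n$ with uniformly equiintegrable Jacobians) must be used in an essential way; this is precisely the mechanism behind \cite{DolezalovaHenclMaly2023} and \cite{DolezalovaHenclMolochanova2023}, and I would lean on a good-radii selection plus the weak continuity of $\cof Du_j$ on slices to make it work. A secondary technical point is verifying equiintegrability of $\cof Du_j$ from $\sup_j\int_\om|Du_j|^{n-1}<\oo$, which is immediate from $|\cof Du_j|\le C|Du_j|^{n-1}$, and equiintegrability of $\det Du_j$ from the superlinearity at infinity of $H$.
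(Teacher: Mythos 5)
Your proposal hinges on two steps that do not hold under the hypotheses of Lemma \ref{lemma_2}, and both are exactly the borderline-$p$ issues the paper is structured to avoid. First, you claim $\cof Du_j \wlim \cof Du$ in $\Leb{1}$ because ``$|\cof Du_j|\le|Du_j|^{n-1}$ and the latter is equiintegrable by $\sup_j E(u_j)<\oo$.'' The energy \eqref{neohookean_energy} contains no superlinear term in the gradient, so the bound $\sup_j\int_\om|Du_j|^{n-1}\,dx<\oo$ gives only an $\Leb{1}$ bound on $|Du_j|^{n-1}$, not equiintegrability; de la Vall\'ee--Poussin applies to $\det Du_j$ (via $H$) but not to the cofactors. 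At the critical exponent $p=n-1$ weak continuity of the $(n-1)$-minors and of $\Det$ can fail by concentration, so your identification $\Det Du_j\to\Det Du$ and hence $\Det Du=\theta\L^n$ is unjustified as stated. Second, you obtain a.e.\ injectivity of $u$ by passing condition \textup{(INV)} to the limit. This is precisely what Conti--De Lellis \cite{ContiDeLellis2003} show can fail in $\W{1,n-1}$, and the rescue you invoke, \cite{DolezalovaHenclMaly2023}, requires the extra coercivity \eqref{condition_H2} (or a uniform distortion bound), which is not assumed in Lemma \ref{lemma_2}. You correctly flag this as the main obstacle, but you do not close it, so conclusion (a) is not reached.

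The paper's proof takes a different and shorter route that bypasses both points: from the energy bound and \eqref{condition_H0} it gets equiintegrability of $(\det Du_j)_j$ and strict positivity a.e.\ of the weak $\Leb{1}$ limit $\theta$ (as in the proof of \cite[Theorem 5.1]{MullerSpector1995}), and then invokes \cite[Theorem 2]{HenaoMora-Corral2010}: for maps that are one-to-one a.e., have positive Jacobian, and satisfy the divergence identities (zero surface energy), a.e.\ convergence $u_j\to u$ together with $\det Du_j\wlim\theta>0$ already yields that $u$ is one-to-one a.e.\ and $\theta=|\det Du|$ a.e.; the hypothesis $\det Du\ge 0$ then gives (a) and (c) without ever discussing \textup{(INV)} for the limit or weak continuity of $\cof$. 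Part (b) follows from a short Egorov plus change-of-variables argument (or part (iii) of the same theorem), using that $\im_G(u_j,\om)=\om_b$ for every $j$ by Lemma \ref{lemma_1}. If you want to repair your argument, the stability of the divergence-identity (surface-energy) framework is the ingredient to use in place of \textup{(INV)}-stability and minor continuity.
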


 \begin{proof}
 	From the assumption $(iii)$ that the energy $E(u_j)$ is uniformly bounded and \eqref{condition_H0}, $(\det Du_j)_j$ is equi-integrable by the de la Vall\'ee Poussin criterion. A general argument  (see, for example, the proof of \cite[Theorem 5.1]{MullerSpector1995}) shows that the weak $\Leb{1}$ limit (after passing to a subsequence) of $(\det Du_j)_j$ exists and is (strictly) positive a.e. Hence, according to \cite[Theorem 2, (ii)]{HenaoMora-Corral2010}, $u$ is one-to-one a.e.\ and 
 	\[
 	\det Du_j \wlim |\det Du | \quad \text{in}\ \Leb{1},
 	\]
 	Since we assume that $\det Du \ge 0$ a.e., $(a)$ and $(c)$ are proved.
 	
 	To prove $(b)$, we will now show that 
 	\begin{equation*}\label{eqn_0}
 		\chi_{\im_G(u,\om)} = \chi_{\om_b} \  \text{a.e.}
 	\end{equation*}
 	According to Lemma \ref{lemma_1}, we know that $\chi_{\im_G(u_j,\om)} = \chi_{\om_b}$ for each $j$. Thus, for any $\psi\in C_c(\R^n)$, by Egorov's theorem and $\det Du_j \wlim \det Du $ from $(c)$, we have
 	\begin{align*}
 		\lim_{j\to\oo} \int_{\R^n} \chi_{\om_b}(y) \psi(y) \,dy &= \lim_{j\to\oo} \int_{\om} \psi(u_j(x)) \det Du_j(x) \,dx \\
 		&= \int_{\om} \psi(u(x)) \det Du(x) \,dx \\
 		&= \int_{\R^n} \chi_{\im_G(u,\om)}(y) \psi(y) \,dy,
 	\end{align*}
 	and hence $\chi_{\im_G(u,\om)} = \chi_{\om_b}$ a.e. (this is also a consequence of \cite[Theorem 2 (iii)]{HenaoMora-Corral2010}, but, for convenience of the readers, we included here a short proof).
 \end{proof}

 \paragraph{}
 Let $\A$ be any subset of $\W{1,n-1}(\om;\R^n)$. We shall denote the (sequential) weak $\W{1,n-1}$ closure of $\A$ by
 \[
 \lbar{\A} := \Big\{ u \in \W{1,n-1}(\om;\R^n) : \textup{There are } u_j \in \A \textup{ such that } u_j \wlim u \Big\}.
 \]
 Note that $\lbar{\A}$ is sequentially closed, i.e.\ if $u_j \in \lbar{\A}$ and $u_j \wlim u$ in $\W{1,n-1}$, then $u \in \lbar{\A}$. This is due to the fact that weakly converging sequences are bounded, and that the weak topology on a norm-bounded set in $\W{1,n-1}(\om;\R^n)$, a reflexive and separable Banach space, is metrizable (see also \cite[Theorem 5.1]{DolezalovaHenclMolochanova2023} for a proof using a diagonalization argument for a similar statement).

 \paragraph{}
 From this point onward, we shall assume that $\td\om_b$ and $\om_b$ are Lipschitz domains (so, in particular, they have finite perimeters). Note that this is automatically the case if $b$ is a bi-Lipschitz orientation-preserving homeomorphism.
 
 \begin{theorem}\label{main_theorem_0}
 	Let $E$ be an energy functional of the form \eqref{neohookean_energy}, where $H$ satisfies \eqref{condition_H0}. Define
 	\begin{align*}
 		\A_0 := \Big\{ u \in \W{1,n-1}(\om &; \R^n) : u \ \textup{is one-to-one a.e., } \det Du > 0 \ \textup{ a.e., } u = b \ \textup{on }\om\bsl\td\om,\\
 		&u \ \textup{satisfies the divergence identities \eqref{div_identity} and}\  E(u) \le E(b)\Big\},
 	\end{align*}
 	and let $\A$ be a subclass of $\A_0$ such that $\det Du \ge 0$ a.e. for any $u\in \lbar\A$. Then 
 	\begin{itemize}
 		\item[(i)] $E$ is sequentially weakly lower semicontinuous on $\lbar{\A}$.
 		
 		\item[(ii)] $\begin{aligned}[t]
 			\lbar{\A} \subset \Big\{ u \in \W{1,n-1}(\om &;\R^n) :  u \ \textup{is one-to-one a.e., } \det Du > 0 \ \textup{ a.e., } \\
 			& u = b \ \textup{on }\om\bsl\td\om,\ \im_G(u,\om) = \om_b\ \text{ a.e., and}\  E(u) \le E(b)\Big\}.
 		\end{aligned}$
 		
 		\item[(iii)] If $u \in \lbar{\A}$ satisfies condition \textup{(INV)}, then $u$ has all of the equivalent properties stated in Lemma \ref{lemma_1}.
 	\end{itemize}
 \end{theorem}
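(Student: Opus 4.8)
The plan is to prove the three assertions in the order: first the set inclusion in (ii), then (i), then the bound $E(u)\le E(b)$ completing (ii), and finally (iii). This ordering avoids circularity, since (ii) as stated already contains the estimate $E(u)\le E(b)$, which will come from (i), whereas the proof of (i) will use the one-to-one-ness of the maps involved and the identity $\int_\om\det Du\,dx=|\om_b|$, both of which come from the inclusion. \emph{Step 1 (geometry of $\lbar{\A}$).} Given $u\in\lbar{\A}$, choose $u_j\in\A\subset\A_0$ with $u_j\wlim u$ in $\W{1,n-1}$. Then $\sup_jE(u_j)\le E(b)<\oo$, and, after passing to a subsequence (using the compact embedding $\W{1,n-1}(\om)\hookrightarrow\hookrightarrow\Leb1(\om)$), $u_j\to u$ a.e.; also $\det Du\ge0$ a.e.\ by the hypothesis on $\A$. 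Hence $(u_j)_j$ and $u$ satisfy all hypotheses of Lemma \ref{lemma_2}, which gives that $u$ is one-to-one a.e., $\det Du>0$ a.e., $\im_G(u,\om)=\om_b$ a.e., and $\det Du_j\wlim\det Du$ in $\Leb1(\om)$. Moreover $u=b$ on $\om\bsl\td\om$, and the area formula with integrand $1$ gives $\int_\om\det Du\,dx=|\im_G(u,\om)|=|\om_b|<\oo$, so $\det Du\in\Leb1(\om)$. This establishes the inclusion claimed in (ii) apart from the bound $E(u)\le E(b)$.

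\emph{Step 2 (part (i)).} Let $u_j\in\lbar{\A}$ with $u_j\wlim u$; then $u\in\lbar{\A}$ since $\lbar{\A}$ is sequentially weakly closed. We may assume $\liminf_jE(u_j)=\lim_jE(u_j)=:L<\oo$ and, along a further subsequence, that $u_j\to u$ a.e., $\int_\om|Du_j|^{n-1}\,dx\to L_1$ and $\int_\om H(\det Du_j)\,dx\to L_2$ with $L_1+L_2=L$ (both integrals are nonnegative and their sum converges). By Step 1 each $u_j$ is one-to-one a.e.\ with $\det Du_j>0$ a.e., and $\det Du\ge0$ a.e.; since $\sup_jE(u_j)<\oo$, the part of the proof of Lemma \ref{lemma_2} yielding its conclusions (a) and (c) applies verbatim — it uses only one-to-one-ness a.e., positivity of the Jacobians, the energy bound, $u_j\to u$ a.e.\ and $\det Du\ge0$ a.e., and \emph{not} the divergence identities — so $\det Du_j\wlim\det Du$ in $\Leb1(\om)$. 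Now $\int_\om|Du|^{n-1}\le\liminf_j\int_\om|Du_j|^{n-1}=L_1$ by weak lower semicontinuity of the convex continuous functional $v\mapsto\int_\om|v|^{n-1}$ on $\Leb{n-1}(\om;\R^{n\times n})$, and $\int_\om H(\det Du)\le\liminf_j\int_\om H(\det Du_j)=L_2$ because, extending $H$ by $+\oo$ on $(-\oo,0]$ with $H(0)=+\oo$ as forced by \eqref{condition_H0}, $H$ is convex and lower semicontinuous, so $w\mapsto\int_\om H(w)\,dx$ is convex and (by Fatou) strongly lower semicontinuous on $\Leb1(\om)$, hence sequentially weakly lower semicontinuous there. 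Adding the two inequalities gives $E(u)\le L_1+L_2=L=\liminf_jE(u_j)$.

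\emph{Step 3 (end of (ii), and (iii)).} If $u\in\lbar{\A}$ with $u_j\in\A\subset\A_0$ and $u_j\wlim u$, then (i) and $E(u_j)\le E(b)$ give $E(u)\le\liminf_jE(u_j)\le E(b)$; with Step 1 this proves (ii). For (iii), let $u\in\lbar{\A}$ satisfy \textup{(INV)}. By (ii), $u$ is one-to-one a.e., $\det Du>0$ a.e.\ with $\det Du\in\Leb1(\om)$, $u=b$ on $\om\bsl\td\om$, and $\im_G(u,\om)=\om_b$ a.e.; since $\om_b$ is bounded, also $u\in\Leb\oo(\om;\R^n)$. Thus the standing hypotheses of Lemma \ref{lemma_1} hold, and by the equivalence there it suffices to verify the remaining conditions in Lemma \ref{lemma_1}(iii), i.e.\ $\Det Du=(\det Du)\L^n$. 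I would obtain this from the fact that, for maps satisfying \textup{(INV)}, $\Det Du$ is a nonnegative Radon measure with $\Det Du\ge(\det Du)\L^n$ and $\Det Du(U)=\L^n(\im_T(u,U))$ for good open sets $U$: taking $U$ with $\td\om\subset\subset U\subset\subset\om$ one has, exactly as in the proof of Lemma \ref{lemma_1}, $\im_T(u,U)=b(U)$ (since $u=b$ near $\de U$ and $b$ is an orientation-preserving homeomorphism, so $\deg(u,\de U,\cdot)=\deg(b,\de U,\cdot)=\chi_{b(U)}$ a.e.\ via \eqref{degree_defn}); exhausting $\om$ by such $U$ gives $\Det Du(\om)=|\om_b|=\int_\om\det Du\,dx$. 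Hence the nonnegative measure $\Det Du-(\det Du)\L^n$ has zero total mass, so $\Det Du=(\det Du)\L^n$, and Lemma \ref{lemma_1} delivers all of its equivalent conclusions for $u$.

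\emph{Main obstacle.} The crux is the identity $\Det Du=(\det Du)\L^n$ in Step 3 — i.e.\ ruling out cavitation for the limit map once its geometric image already exhausts $\om_b$; this rests on structural properties of \textup{(INV)} maps ($\Det Du$ a nonnegative measure dominating $(\det Du)\L^n$, and its value on good open sets being the measure of the topological image), drawn from \cite{MullerSpector1995,ContiDeLellis2003,HenaoMora-Corral2012}, and one must check these at the borderline exponent $p=n-1$. A secondary but essential point is that the weak $\Leb1$-convergence $\det Du_j\wlim\det Du$ in Step 2 must hold for sequences in $\lbar{\A}$, whose members need not satisfy the divergence identities; this is why it matters that conclusions (a) and (c) of Lemma \ref{lemma_2} do not invoke hypothesis (ii) of that lemma, so that the lower semicontinuity argument truly closes up on $\lbar{\A}$ rather than only on $\A$.
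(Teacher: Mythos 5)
Your overall route coincides with the paper's: Lemma \ref{lemma_2} applied to approximating sequences from $\A$ gives the geometric description of $\lbar{\A}$ (your Step 1 is the paper's part (ii) minus the energy bound, which both you and the paper then recover from (i)); lower semicontinuity is reduced to the weak $\Leb{1}$ convergence $\det Du_j \wlim \det Du$ along sequences in $\lbar{\A}$ (the paper then invokes Ball--Currie--Olver where you argue directly by convexity of $|\cdot|^{n-1}$ and of the extended $H$ --- both finishes are fine and essentially equivalent); and your Step 3 is the paper's good-open-set mass computation for (iii) in a mildly repackaged form (a nonnegative measure $\Det Du-(\det Du)\L^n$ with zero total mass, versus the explicit atomic decomposition of \cite{ContiDeLellis2003} with each atom shown to vanish).

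The gap is in Step 2, at exactly the point you flag as ``secondary but essential''. You assert that the proof of conclusions (a) and (c) of Lemma \ref{lemma_2} ``applies verbatim'' to sequences in $\lbar{\A}$ because it does not use the divergence identities. That is not correct: in the proof of Lemma \ref{lemma_2}, the identification of the weak $\Leb{1}$ limit $\theta$ of $(\det Du_j)_j$ with $|\det Du|$ (as opposed to the mere existence of a positive weak limit, which does follow from equi-integrability alone) is obtained from \cite[Theorem 2(ii)]{HenaoMora-Corral2010}, whose hypotheses include that each $u_j$ has vanishing surface energy, i.e.\ satisfies the divergence identities. A.e.\ injectivity, positivity of the Jacobians, an energy bound and a.e.\ convergence do not by themselves identify $\theta$ with $\det Du$ at the borderline exponent $p=n-1$; this is precisely where the Conti--De Lellis pathologies (singular parts of $\Det Du$, orientation reversal in the limit) live, so the step as written would fail. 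The repair is the one the paper makes: by your Step 1, $\im_G(u_j,\om)=\om_b$ a.e.\ for every $j$, hence $\chi_{\im_G(u_j,\om)}\to\chi_{\om_b}$ in $\Leb{1}$ trivially, and \cite[Lemma 4.1]{MullerSpector1995} --- whose hypotheses are a.e.\ convergence, a.e.\ injectivity, $\det Du_j\wlim\theta$ with $\theta>0$ a.e., and convergence of the characteristic functions of the geometric images --- then yields $\theta=|\det Du|=\det Du$. You already hold the needed ingredient in Step 1; you must deploy it here rather than rely on the inaccurate claim about the structure of Lemma \ref{lemma_2}'s proof.
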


 \begin{proof}
 	$(i)$: Let $(u_j)_j$ be a sequence in $\lbar{\A}$, i.e.\ each $u_j$ is a limit of some sequence in $\A$. By Lemma \ref{lemma_2}, each $u_j$ is one-to-one a.e., satisfies $\det Du_j > 0$ a.e.\ and $\im_G(u_j,\om) = \om_b$ a.e.\ for all $j\in\Bbb N$. Suppose that $u_j \wlim u$, from  \eqref{condition_H0}, $(\det Du_j)_j$ is weakly compact in $\Leb{1}$, so for any arbitrary subsequence of $(u_j)_j$, we may extract a further subsequence $(u_{j_k})_k$ such that $\det Du_{j_k} \wlim \theta$ for some $\theta \in \Leb{1}(\om;\R^{n\times n})$ and $u_{j_k} \to u$ a.e.
 	
 	Since $u$ also belongs to $\lbar{\A}$ (see the discussion prior to this theorem), Lemma \ref{lemma_2} implies that $u$ is one-to-one a.e.\ and  $\det Du > 0$ a.e. According to \cite[Lemma 4.1]{MullerSpector1995}\footnote{As $\im_G(u_j,\om)$ is a fixed set for all $j\in\Bbb N$, we trivially have $\chi_{\im_G(u_j,\om)} \to \chi_{\om_b}$ in $\Leb{1}$. The fact that $\theta > 0$ a.e.\ can be argued the same way as in Lemma \ref{lemma_2}.}, we see that
 	\[
 	\theta = |\det Du| = \det Du \quad \text{a.e.}
  	\]
 	This shows that $\det Du_{j_k} \wlim \det Du$, and thus also for the whole sequence, i.e.\ $\det Du_{j} \wlim \det Du$. We have shown that the Jacobian is weakly continuous in the class $\lbar{\A}$, therefore $E$ is weakly lower semicontinuous on $\lbar{\A}$ as a consequence of the classical result \cite[Theorem 5.4]{BallCurrieOlver1981} by Ball-Currie-Olver.

 	$(ii)$:  Let $u\in\lbar{\A}$. From Lemma \ref{lemma_2}, we know that $u$ is one-to-one a.e.,  $\det Du > 0$ a.e., and $\im_G(u,\om) = \om_b$ a.e. The fact that $u=b$ on $\om\bsl\td\om$ is obvious. The inequality $E(u) \le E(b)$ is the consequence of the sequential weak lower semicontinuity of $E$ that was proved in $(i)$.

 	$(iii)$: Suppose that $u \in \lbar{\A}$ satisfies condition \textup{(INV)}. From $(ii)$, we know that $\im_G(u,\om) = \om_b$ a.e., thus
 	\begin{equation}\label{eqn_1}
 		\Per(\im_G(u,\om)) = \Per(\om_b) < \oo.
 	\end{equation}
 	By \cite[Theorem 4.2]{ContiDeLellis2003} (see also \cite{DeLellisGhiraldin2010}), it follows that $\Det Du$ is a Radon measure and
 	\begin{equation}\label{eqn_2}
 		\Det  Du = (\det Du)\L^n + \sum_{\xi \in \C} \L^n(\im_T(u,\{\xi\}))\, \dt_\xi,
 	\end{equation}
 	for some (at most) countable set $\C$, the set of points in which $u$ opens up cavities, that is contained in $\om$. Here $\dt_\xi$ denotes the Dirac mass at $\xi$. In fact, from the assumption that $b$ satisfies the divergence identities, Lemma \ref{lemma_1} (iii) says that $b$ cannot open any cavity, thus we must have $\C \subset \lbar{\td\om}$ since $u$ coincides with $b$ on $\om\bsl\td\om$. 
 	
 	We will now show that all the cavities must have zero volume. Indeed, taking an arbitrary good open set $U$ such that $\td\om\subset\subset U \subset\subset \om$, 
 	\begin{align*}
 		\L^n(\om_b) &= \L^n(\om_b \bsl b(U)) +  \int_{U} \det Db(x) \,dx \\
 		&= \L^n(\om_b \bsl b(U)) + \Det Db(U) \\
 		&= \L^n(b(\om \bsl U)) + \Det Du(U), \numberthis \label{eqn_3}
 	\end{align*}
 	where the last equality follows from \cite[Proposition 2.17 (iv)-(v)]{HenaoMora-Corral2012}, i.e.\ 
 	\[
 	\Det Db(U) = \L^n(\im_T(b,U)) = \L^n(\im_T(u,U)) = \Det Du(U)
 	\]
 	because $\res{u}_{\de U} = \res{b}_{\de U}$. Recalling that $b$ and $u$ agree on $\om \bsl U$, we substitute that into \eqref{eqn_3} to get
 	\begin{align*}
 		\L^n(\om_b) &= \L^n(\im_G(u,\om\bsl U)) + \int_{U} \det Du(x) \,dx + \sum_{\xi \in \C} \L^n(\im_T(u,\{\xi\})) \\
 		&= \L^n(\im_G(u,\om)) + \sum_{\xi \in \C} \L^n(\im_T(u,\{\xi\})) \\
 		&= \L^n(\om_b) + \sum_{\xi \in \C} \L^n(\im_T(u,\{\xi\})), 
 	\end{align*}
 	where we used the fact that $\C \subset U$ in the first equality, thus 
 	\[
 	\sum_{\xi \in \C} \L^n(\im_T(u,\{\xi\})) = 0.
 	\]
 	
 	The above equality and \eqref{eqn_2} show that $\Det  Du = (\det Du)\L^n$. Since we already know that $u$ satisfies \textup{(INV)}, we have fulfilled condition $(iii)$ in Lemma \ref{lemma_1}, and hence the rest of the equivalent conditions in Lemma \ref{lemma_1} also hold.
 \end{proof}

 While it is plausible that $\det Du > 0$ a.e.\ for every $u \in \lbar{\A_0}$ (in particular, we expect that the assumption $\det Du \ge 0$ a.e.\ in Lemma \ref{lemma_2} can be dropped), we don't have the proof of this statement yet, hence it is not clear if $E$ is weakly lower semicontinuous on the class $\lbar{\A_0}$ or not. However, a remarkable result by Hencl and Onninen in \cite{HenclOnninen2018} implies that if each member of the sequence $(u_j)_j$ is a $\W{1,n-1}$ homeomorphism whose Jacobian is non-negative  a.e., then $u_j \wlim u$ in $\W{1,n-1}$ is enough to guarantee that $\det Du \ge 0$ a.e. This gives us the following existence theorem.

  \begin{theorem}\label{main_theorem_1}
  	Let $b\in\W{1,n-1}(\om;\R^n)$ be an orientation-preserving homeomorphism satisfying the divergence identities \eqref{div_identity} and let $E$ be an energy functional of the form \eqref{neohookean_energy}, where $H$ satisfies \eqref{condition_H0}. For any nonempty subclass $\A \subset \A_0$ such that $\det Du \ge 0$ a.e.\ for every $u\in \lbar\A$, the energy functional $E$ admits a minimizer in $\lbar\A$. In particular, let
  	\begin{itemize}
  		\item[] $\begin{aligned}[t]
  			\A_{dif} := \Big\{ u \in \W{1,n-1}(\om;\R^n) :  u \ &\textup{is a\, $C^1$ diffeomorphism,}\ \det Du > 0 \ \textup{ a.e., } \\
  			&\quad\quad u = b \ \textup{on }\om\bsl\td\om \text{ and}\  E(u) \le E(b)\Big\}, \text{ and}
  		\end{aligned}$
  		
  		\item[] $\begin{aligned}[t]
  			\A_{hom} :=  \Big\{ &u \in \W{1,n-1}(\om;\R^n) : u \ \ \textup{is a homeomorphism,}\ \det Du > 0 \ \textup{a.e.,}\\
  			&u\ \text{satisfies Lusin's condition \textup{(N)},}\ u = b \ \textup{on }\om\bsl\td\om \text{ and }  E(u) \le E(b) \Big\},
  		\end{aligned}$
  	\end{itemize}
  	then $E$ has a minimizer in $\lbar{\A_{hom}}$ if $b$ is a homeomorphism that satisfies Lusin's condition \textup{(N)}, and $E$ has a minimizer in $\lbar{\A_{dif}}$ if $b$ is a $C^1$ diffeomorphism.
  	
  	Moreover, if a minimizer $u$ of $E$ in any of the above classes satisfies condition \textup{(INV)}, then it also satisfies the divergence identities \eqref{div_identity}, $\Det Du = (\det Du)\L^n$ and $u^{-1} \in \W{1,1}(\om_b;\R^n)$.
  \end{theorem}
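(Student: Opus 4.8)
The plan is to obtain the minimizer by the direct method, letting Theorem~\ref{main_theorem_0} supply the weak lower semicontinuity and the structural properties of $\lbar\A$, and then to read off the statement about condition~\textup{(INV)} directly from part~(iii) of that theorem together with Lemma~\ref{lemma_1}.

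\emph{Existence of a minimizer in $\lbar\A$.} Let $\A\subset\A_0$ be nonempty with $\det Du\ge 0$ a.e.\ for every $u\in\lbar\A$. Every element of $\A_0$ has energy at most $E(b)$, which is finite in the situation of interest, so $m:=\inf_{\lbar\A}E$ is finite. Take a minimizing sequence $(u_j)_j\subset\lbar\A$; for large $j$ we have $E(u_j)\le m+1$, hence $H\ge 0$ and the form \eqref{neohookean_energy} of $E$ give $\sup_j\int_\om|Du_j|^{n-1}\,dx<\oo$, and since $u_j-b$ vanishes on $\om\bsl\td\om$, the Poincar\'e inequality together with $b\in\W{1,n-1}(\om;\R^n)$ bounds $(u_j)_j$ uniformly in $\W{1,n-1}(\om;\R^n)$. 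By reflexivity and the compact embedding $\W{1,n-1}(\om;\R^n)\hookrightarrow\Leb{1}(\om;\R^n)$ we may pass to a subsequence with $u_j\wlim u$ in $\W{1,n-1}$ and $u_j\to u$ a.e.; since $\lbar\A$ is sequentially weakly closed, $u\in\lbar\A$, and Theorem~\ref{main_theorem_0}(i) gives $E(u)\le\liminf_j E(u_j)=m$. Thus $u$ minimizes $E$ over $\lbar\A$.

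\emph{The classes $\A_{dif}$ and $\A_{hom}$.} It remains to check that they satisfy the hypotheses of the preceding step. That $\A_{dif},\A_{hom}\subset\A_0$ is almost immediate: an element $u$ of either class is injective (so one-to-one a.e.), has $\det Du>0$ a.e.\ and $\det Du\in\Leb{1}_{loc}(\om)$, agrees with $b$ on $\om\bsl\td\om$, and has $E(u)\le E(b)$ by definition; only the divergence identities~\eqref{div_identity} require an argument. For a $C^1$ diffeomorphism these follow from the classical computation based on the Piola identity $\Dv(\cof Du)=0$, the chain rule, and one integration by parts. For a homeomorphism $u$ satisfying Lusin's condition~\textup{(N)} with $\det Du>0$ a.e., condition~\textup{(INV)} holds trivially --- for $B(x_0,r)\subset\subset\om$ the topological image $\im_T(u,B(x_0,r))$ agrees up to a null set with $u(B(x_0,r))$, and $u(x)\in u(B(x_0,r))$ precisely when $x\in B(x_0,r)$ --- while Lusin's condition~\textup{(N)} excludes cavitation and gives $\Det Du=(\det Du)\L^n$; hence Lemma~\ref{lemma_1}(iii) holds and therefore so do the divergence identities. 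Both classes are nonempty under the stated hypotheses, as $b\in\A_{dif}$ when $b$ is a $C^1$ diffeomorphism and $b\in\A_{hom}$ when $b$ is a homeomorphism satisfying Lusin's condition~\textup{(N)}. Finally, each $u\in\lbar{\A_{hom}}$ is, by definition, a weak $\W{1,n-1}$ limit of homeomorphisms with non-negative Jacobian, so $\det Du\ge 0$ a.e.\ by the Hencl--Onninen theorem \cite{HenclOnninen2018}; the same holds for $\lbar{\A_{dif}}$, since $C^1$ diffeomorphisms are such homeomorphisms. The existence of minimizers in $\lbar{\A_{dif}}$ and $\lbar{\A_{hom}}$ now follows from the first step.

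\emph{Minimizers satisfying \textup{(INV)} and the main obstacle.} If a minimizer $u$ in one of these classes satisfies condition~\textup{(INV)}, then $u\in\lbar\A$ with $\A\subset\A_0$ and $\det Du\ge 0$ a.e.\ on $\lbar\A$, so Theorem~\ref{main_theorem_0}(iii) applies and $u$ has all the equivalent properties of Lemma~\ref{lemma_1}; in particular $u$ satisfies the divergence identities~\eqref{div_identity}, $\Det Du=(\det Du)\L^n$, and $u^{-1}\in\W{1,1}(\om_b;\R^n)$. Essentially all of the analytic content has been front-loaded into Theorem~\ref{main_theorem_0} and Lemma~\ref{lemma_1}, so the only real work is the inclusion $\A_{dif},\A_{hom}\subset\A_0$; the main obstacle there is verifying the divergence identities for $\W{1,n-1}$ homeomorphisms satisfying Lusin's condition~\textup{(N)} with positive Jacobian, where one must also be careful that such maps lie in $\Leb{\oo}(\om;\R^n)$ (using $u=b$ near $\de\om$ and the boundedness of $\om_b$) so that the distributional determinant, the degree, and the good-open-set machinery behind Lemma~\ref{lemma_1} are actually available.
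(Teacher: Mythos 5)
Your proposal is correct and follows the paper's proof in all essentials: existence via the uniform energy bound $E\le E(b)$ on $\lbar\A$, weak compactness and the sequential weak closedness of $\lbar\A$, and the lower semicontinuity from Theorem~\ref{main_theorem_0}(i); the inclusions $\A_{dif}\subset\A_{hom}\subset\A_0$ and nonemptiness of these classes; $\det Du\ge 0$ a.e.\ on the closures via Hencl--Onninen \cite{HenclOnninen2018}; and the statement about minimizers satisfying \textup{(INV)} read off from Theorem~\ref{main_theorem_0}(iii). The one sub-step you handle differently is the inclusion $\A_{hom}\subset\A_0$: the paper disposes of it by citing \cite[Theorem 5.5]{HenaoMora-Corral2012}, whereas you re-derive it by verifying condition (iii) of Lemma~\ref{lemma_1}. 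Your sketch is viable but slightly misattributes the role of Lusin's condition \textup{(N)}: for a homeomorphism the absence of cavitation is automatic (the topological image of a point is the singleton $\{u(\xi)\}$), and \textup{(N)} is really needed to identify $\L^n(u(B))$ with $\L^n(\im_G(u,B))=\int_B\det Du\,dx$, i.e.\ to rule out $u$ mapping the null set $B\bsl\om_0$ onto a set of positive measure, which is what actually gives $\Det Du=(\det Du)\L^n$. Note also that the identification of the distributional degree \eqref{degree_defn} with the Brouwer degree, underlying your claim that $\im_T(u,B(x_0,r))$ agrees with $u(B(x_0,r))$ up to a null set, holds only for a.e.\ radius $r$; this suffices for \textup{(INV)} but should be said.
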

  
  \begin{proof}
  	Let $\A$ be a subclass of $\A_0$ such that $\det Du \ge 0$ a.e. for every $u\in \lbar\A$. Due to the bound on the energy $\sup_{u\in\lbar{\A}} E(u) \le E(b)$, $\lbar{\A}$ is weakly compact in $\W{1,n-1}(\om;\R^n)$, hence the sequential weak lower semicontinuity of $E$ implies that $E$ admits a minimizer in $\lbar{\A}$. A minimizer $u$ has the properties stated in Theorem \ref{main_theorem_0} $(iii)$, provided that $u$ satisfies \textup{(INV)}.
  	
  	Now, we observe that $\A_{dif} \subset \A_{hom} \subset \A_0$. Indeed, the fact that Lipschitz maps (hence also $C^1$ maps) satisfy Lusin's condition (N) is well-known (see e.g.\ \cite{EvansGariepy1992_1st}), hence $\A_{dif} \subset \A_{hom}$. As for $\A_{hom} \subset \A_0$, this is a consequence of \cite[Theorem 5.5]{HenaoMora-Corral2012}. If $b$ satisfies Lusin's condition \textup{(N)}, then $b \in \A_{hom}$, and if we assume further that $b$ is a $C^1$ diffeomorphism, then $b \in \A_{dif}$. Therefore $\A_{hom}$ and $\A_{dif}$ are nonempty, provided that $b$ has enough regularity.
  	
  	To see that $\det Du \ge 0$ a.e.\ for every $u$ in $\lbar{\A_{dif}}$ and $\lbar{\A_{hom}}$, we note that each member $u\in\lbar{\A_{hom}}$ (and hence also each $u\in\lbar{\A_{dif}}$) is a weak $\W{1,n-1}$ limit of a sequence of homeomorphisms with a.e.\ positive Jacobians, hence \cite[Theorem 1]{HenclOnninen2018} implies that the Jacobian of $u$ is also positive a.e.
  \end{proof}
  
  \textit{Remark:} Suppose that $u_{dif}$ and $u_{hom}$ are minimizers of $E$ in the classes $\lbar{\A_{dif}}$ and $\lbar{\A_{hom}}$, respectively. Clearly, we have
  \[
  E(u_{hom}) \le E(u_{dif})
  \]
  since $\A_{dif} \subset \A_{hom}$. However, it is still not known if the inequality is strict or not. The same remark also applies to the inequality
  \[
  \inf_{\lbar{\A}} E \le \inf_{\A} E,
  \]
  where $\A\subset \A_0$.

  \paragraph{}
  It is natural to ask which subclasses $\A$ of $\A_0$ have the properties that allow us to conclude that condition \text{(INV)} always holds for each $u \in\lbar{\A}$ (or just for each $u \in\lbar{\A}$ that minimizes $E$ in its respective class) so that $u$ has the nice properties guaranteed by Lemma \ref{lemma_1}. This question is notoriously difficult due to the fact that in the $\W{1,n-1}(\om;\R^n)$ setting, condition \text{(INV)} does not pass to the limit in general, as shown by Conti and De Lellis in \cite{ContiDeLellis2003}. However, in a recent work \cite{DolezalovaHenclMaly2023}, Dole\v{z}alov\'a, Hencl, and Mal\'y have shown that by assuming that $H$ satisfies a stronger coercivity condition, condition (INV) does pass to the limit for any sequence $(u_j)_j$ of homeomorphisms with a fixed Dirichlet boundary condition such that $\sup_{j}E(u_j)<\oo$. 
  
 The additional conditions assumed in \cite{DolezalovaHenclMaly2023} are the following: there exist some $c>0$ such that
 \begin{equation}\label{condition_H1}
 	c^{-1}H(t) \le H(2t) \le c H(t)
 \end{equation}	
 and some $C>0$ such that 
 \begin{equation}\label{condition_H2}
 	 H(t) \ge \frac{C}{t^a} \quad\textup{for all}\quad t>0,
 \end{equation}
 where $a = \frac{n-1}{n^2-3n+1}$ (when $n=3$, we have $a=2$). Before we state our last result, it should be noted that the existence of a minimizer of $E$ in $\lbar{\A_{hom}}$ was already proved in \cite[Theorem 5.5]{DolezalovaHenclMolochanova2023} by Dole\v{z}alov\'a, Hencl, and Molchanova (with a less restrictive Dirichlet boundary condition than ours). The novelty of Theorem \ref{main_result_in_Hom} is the extra properties of $u$ that follow from Theorem \ref{main_theorem_0} $(iii)$. The existence of a minimizer $E$ in $\lbar{\A_{hom}}$ already follows from Theorem \ref{main_theorem_1} without having to assume that $H$ satisfies \eqref{condition_H1} and \eqref{condition_H2}, but such a minimizer may not satisfy condition (INV).

 \begin{theorem}\label{main_result_in_Hom}
 	Let $b \in \W{1,n-1}(\om;\R^n)$ be an orientation-preserving homeomorphism satisfying Lusin's condition \textup{(N)} and $E$ be an energy functional of the form \eqref{neohookean_energy}, where $H$ satisfies \eqref{condition_H0}, \eqref{condition_H1} and \eqref{condition_H2}. Then each function in $\lbar{\A_{hom}}$ satisfies all of the equivalent conditions in Lemma \ref{lemma_1}, and $E$ admits a minimizer in $\lbar{\A_{hom}}$.
 	
 	In particular, for the $n=3$ case, the energy functional 
 	\[
 	E(u) = \int_\om |Du(x)|^2 + H(\det Du(x)) \,dx,
 	\]
 	admits a minimizer in $\lbar{\A_{hom}}$, the weak closure of the class 
 	\begin{align*}
 		\A_{hom} =  \Big\{ u &\in \W{1,2}(\om;\R^3) : u \ \ \textup{is a homeomorphism,}\ \det Du > 0 \ \textup{a.e.,}\\
 		&u\ \text{satisfies Lusin's condition \textup{(N)},}\ u = b \ \textup{on }\om\bsl\td\om \text{ and }  E(u) \le E(b) \Big\},
 	\end{align*}
 	and for each $u \in \lbar{\A_{hom}}$, $u$ satisfies the divergence identities \eqref{div_identity}, $u$ satisfies condition \textup{(INV)}, $\Det Du = (\det Du)\L^3$, $\im_G(u,\om) = \om_b$ a.e. and $u^{-1} \in \W{1,1}(\om_b;\R^3)$.
 \end{theorem}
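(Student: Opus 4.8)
The plan is to assemble the statement from three ingredients already at our disposal: the abstract existence/semicontinuity package of Theorems~\ref{main_theorem_1} and \ref{main_theorem_0}, the fact (via \cite[Theorem~5.5]{HenaoMora-Corral2012}) that a $\W{1,n-1}$ homeomorphism with a.e.\ positive Jacobian satisfying Lusin's condition \textup{(N)} automatically satisfies the divergence identities, and the stability of condition \textup{(INV)} under weak limits of homeomorphisms proved by Dole\v{z}alov\'a--Hencl--Mal\'y \cite{DolezalovaHenclMaly2023} under the coercivity hypotheses \eqref{condition_H1}--\eqref{condition_H2}. Essentially everything reduces to checking that our hypotheses feed correctly into these results.

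First I would record the structural facts. Since $b$ is an orientation-preserving homeomorphism satisfying Lusin's condition \textup{(N)}, we have $b\in\A_{hom}$, so $\A_{hom}$ is nonempty; by \cite[Theorem~5.5]{HenaoMora-Corral2012} every $u\in\A_{hom}$ satisfies the divergence identities, hence $\A_{hom}\subset\A_0$. Moreover, by \cite[Theorem~1]{HenclOnninen2018}, a weak $\W{1,n-1}$ limit of homeomorphisms with a.e.\ positive Jacobian has a.e.\ non-negative Jacobian, so $\det Du\ge 0$ a.e.\ for every $u\in\lbar{\A_{hom}}$. With these two properties in place, Theorem~\ref{main_theorem_1} applies verbatim (its hypotheses on $b$ being met because Lusin's \textup{(N)} entails the divergence identities), and it yields a minimizer of $E$ in $\lbar{\A_{hom}}$; this disposes of the existence assertion.

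Next I would show that \emph{every} $u\in\lbar{\A_{hom}}$ satisfies condition \textup{(INV)}. By definition of the weak closure there is a sequence of homeomorphisms $u_j\in\A_{hom}$ with $u_j\wlim u$ in $\W{1,n-1}$, all carrying the thick Dirichlet datum $u_j=b$ on $\om\bsl\td\om$ and satisfying $E(u_j)\le E(b)<\oo$. The lower bound \eqref{condition_H2} gives $C\int_\om(\det Du_j(x))^{-a}\,dx\le\int_\om H(\det Du_j(x))\,dx\le E(b)$, so $\sup_j\int_\om(\det Du_j)^{-a}\,dx<\oo$; combined with the doubling-type estimate \eqref{condition_H1}, this is precisely the situation treated in \cite{DolezalovaHenclMaly2023}, and we conclude that condition \textup{(INV)} passes to the limit, i.e.\ $u$ satisfies \textup{(INV)}. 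The one point requiring care here is the compatibility of our thick boundary condition with their fixed-boundary hypothesis and the verification that the coercivity they require is exactly what \eqref{condition_H2} supplies along an energy-bounded sequence; this is the main (and only non-bookkeeping) obstacle.

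Finally, since $\A_{hom}$ is a subclass of $\A_0$ with $\det Du\ge 0$ a.e.\ on $\lbar{\A_{hom}}$, and since each $u\in\lbar{\A_{hom}}$ satisfies \textup{(INV)}, Theorem~\ref{main_theorem_0}(iii) gives that $u$ enjoys all of the equivalent properties of Lemma~\ref{lemma_1}: it satisfies the divergence identities \eqref{div_identity}, $\Det Du=(\det Du)\L^n$, $\im_G(u,\om)=\om_b$ a.e., and $u^{-1}\in\W{1,1}(\om_b;\R^n)$; condition \textup{(INV)} itself is among these. The $n=3$ statement is the instance $n=3$ of the above, where $a=\tfrac{n-1}{n^2-3n+1}=2$ and $|Du|^{n-1}=|Du|^2$, so no additional argument is needed.
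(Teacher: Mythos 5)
Your proposal is correct and follows essentially the same route as the paper: existence of a minimizer via Theorem~\ref{main_theorem_1} (whose hypotheses you verify explicitly, though these checks are already contained in the paper's proof of that theorem), condition \textup{(INV)} for every element of $\lbar{\A_{hom}}$ via the stability result of Dole\v{z}alov\'a--Hencl--Mal\'y under \eqref{condition_H1}--\eqref{condition_H2}, and the remaining properties from Theorem~\ref{main_theorem_0}(iii) and Lemma~\ref{lemma_1}. The paper's proof is simply a terser version of the same assembly, citing \cite[Theorem~1.1]{DolezalovaHenclMaly2023} directly for the passage of \textup{(INV)} to the limit.
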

 
 \begin{proof}
 	Since $b \in \A_{hom}$, $\lbar{\A_{hom}}$ is nonempty and we can apply Theorem \ref{main_theorem_1}. From \cite[Theorem 1.1]{DolezalovaHenclMaly2023}, each element of $\lbar{\A_{hom}}$ satisfies condition (INV), hence Theorem \ref{main_theorem_0} $(iii)$ holds.
 \end{proof}
 It was shown in \cite[Theorem 5.5]{DolezalovaHenclMolochanova2023} that, under the assumptions \eqref{condition_H1} and \eqref{condition_H2}, the minimizer of $E$ in $\lbar{\A_{hom}}$ satisfies Lusin's condition (N) as well.

 \paragraph{}
 In fact, as mentioned in \cite{DolezalovaHenclMaly2023}, the coercivity condition \eqref{condition_H2} can be relaxed to a uniform $\Leb{\frac{1}{n-1}}$ bound on the \textit{distortion function}, which is defined as follows:
 Let $K_u(x) := \frac{|Du(x)|^n}{|\det Du(x)|}$ be the \textit{distortion function} of a Sobolev function $u$, where we set $K_u(x) = 1$ if $\det Du(x) = 0$ (here $|Du(x)|$ denotes the operator norm of the matrix $Du(x)$). \cite[Theorem 3.1]{DolezalovaHenclMaly2023} states that for a sequence of Sobolev homeomorphisms $u_j$ with uniformly bounded energy ($\sup_j  E(u_j) < \oo$), if the sequence $(K_{u_j})_j$ is bounded in the $\Leb{\frac{1}{n-1}}$ norm, then condition (INV) passes to the limit. Indeed, the fact that $\sup_j \nm{K_{u_j}}_{\Leb{\frac{1}{n-1}}} < \oo$ follows from $\sup_j  E(u_j) < \oo$ and  \eqref{condition_H2} is a consequence of the  H\"{o}lder-Young inequality 
 \begin{align*}
 	\int_\om K_u^{\frac{1}{n-1}}(x) \,dx &= \int_\om |Du(x)|^{\frac{n}{n-1}}\ |\det Du(x)|^{-\frac{1}{n-1}} \,dx \\
 	&\le \frac{1}{p} \int_\om |Du(x)|^{n-1}  \,dx + \frac{1}{q} \int_\om  |\det Du(x)|^{-\frac{n-1}{n^2-3n+1}} \,dx,
 \end{align*}
 where $p = \frac{(n-1)^2}{n}$ and $q = \frac{(n-1)^2}{n^2-3n+1}$. In particular, for $n=3$, we have $\frac{n-1}{n^2-3n+1}=2$.

 \paragraph{Acknowledgement} I thank Marco Barchiesi, Duvan Henao, Carlos Mora-Corral, and R\'emy Rodiac for our discussions and suggestions. Also, I am indebted to Jan Kristensen for introducing me to this beautiful area of mathematics, the calculus of variations.

	\bibliography{Sobolev_regularity_of_the_inverse_for_minimizers_of_the_neo-Hookean_ver2}

\end{document}